\theoremstyle{plain}
\newcounter{probcounter}
\newenvironment{prob}[1][\unskip]{\refstepcounter{probcounter}\par\medskip
   \noindent \textbf{Problem~\theprobcounter.} \ifstrequal{#1}{\unskip}{#1}{(#1)} \rmfamily\itshape}{\medskip\par}
\title{On roots of Wiener polynomials of trees}
\author{Danielle Wang}
\begin{document}
\begin{abstract}
The \emph{Wiener polynomial} of a connected graph $G$ is
the polynomial $W(G;x) = \sum_{i=1}^{D(G)} d_i(G)x^i$
where $D(G)$ is the diameter of $G$, and $d_i(G)$ is the 
number of pairs of vertices at distance $i$ from each other.
We examine the roots of Wiener polynomials of trees.
We prove that the collection of real Wiener roots of trees
is dense in $(-\infty, 0]$, and the collection of complex
Wiener roots of trees is dense in $\CC$. We also prove that the
maximum modulus among all Wiener roots of trees of order
$n \ge 31$ is between $2n-15$ and $2n-16$, and we determine the unique
tree that achieves the maximum for $n \ge 31$. Finally,
we find trees of arbitrarily large diameter whose
Wiener roots are all real.
\\\\
\textbf{Keywords:} Graph polynomials; Wiener polynomial;
Polynomial roots; Wiener index; Distance in graphs
\end{abstract}

\maketitle

\section{Introduction}
\label{intro}
\begin{definition}
The \emph{Wiener polynomial} of a connected graph $G$ is 
\[
	W(G;x) = \sum_{i=1}^{D(G)} d_i(G) x^i
\]
where $D(G)$ denotes the diameter of $G$, and $d_i(G)$
is the number of unordered pairs of $G$
with distance $i$.
The \emph{reduced} Wiener polynomial of $G$ is
$$\widehat W(G;x) = W(G;x)/x.$$
We are concerned with the roots of this polynomial, which
we call \emph{Wiener roots}, especially for trees.
\end{definition}

Wiener polynomials were introduced in \cite{hosoya1988some}
and \cite{sagan1996wiener}.
Wiener polynomials are related to a quantity called
the \emph{Wiener index} of a connected graph, which originated
in chemical graph theory and is defined
to be the sum of the distances between all pairs of vertices
\cite{wiener1947structural}. The Wiener index was originally defined
for molecular graphs and is of interest because
it is closely correlated with the boiling point and several 
other physical properties of the substance \cite{gutmana1995wiener,rouvray1976dependence,stiel1962normal,wiener1947structural}.

It is easy to see that the Wiener index
of a graph is equal to the derivative of its Wiener polynomial
evaluated at $x = 1$.
In this paper, we exclusively study Wiener polynomials
of trees, so we remark that Wiener polynomials of trees also
arise naturally in network reliability, in the following way.
Given a graph where each edge
is operational with probability $p$, the \emph{resilience}
of $G$ is the expected number of pairs of vertices that
can communicate \cite{amin1993expected,colbourn1987network}. For a tree $T$, notice that
the probability that two vertices $u$ and $v$ can communicate
is exactly $p^{d(u,v)}$, where $d(u,v)$ is the
distance between $u$ and $v$. Thus, the resilience of $T$
is precisely $W(T;p)$.

Several properties of graph polynomials have been successfully
studied for chromatic \cite{fengming2005chromatic}, characteristic \cite{mowshowitz1972characteristic},
independence \cite{levit2005independence}, and reliability \cite{brownnetwork} polynomials of graphs.
Some interesting properties include the size of roots, 
the closure of the collection of roots, and the realness of 
the roots of these polynomials.
The roots of Wiener polynomials were studied in \cite{brown2018roots}. 
Wiener polynomials have also been studied in 
\cite{dehmer2012location,guo2006hyper,walikar2006hosoya,yan2007behavior}.

In 2018, Brown, Mol, and Oellermann
bounded the modulus of Wiener roots of connected graphs and determine
the unique graph with the maximum modulus Wiener root
\cite[Theorem 2.2]{brown2018roots}.
They also showed that the closure of the collection real Wiener
roots of connected graphs is the interval $(-\infty, 0]$
\cite[Theorem 3.1]{brown2018roots},
and the closure of the collection of real Wiener roots of 
trees contains the interval $(-\infty, -1] $
\cite[Theorem 3.3]{brown2018roots}.
They also prove that the collection of Wiener roots is not 
contained in any half-plane of $\CC$ \cite[Theorem 4.3]{brown2018roots}. 

They authors of \cite{brown2018roots} propose the following problems.

\begin{restatable}[{\cite[Problem 5.2]{brown2018roots}}]{probbb}{probreal}
\label{prob:real}
	Is the closure of real Wiener roots of trees the entire
	interval $(-\infty, 0]$?.
\end{restatable}

\begin{restatable}[{\cite[Problem 5.3]{brown2018roots}}]{probbb}{probcomplex}
\label{prob:complex}
	Is the closure of the collection of Wiener roots of
	connected graphs (or trees) the entire complex plane?
\end{restatable}

\begin{restatable}[{\cite[Problem 5.1]{brown2018roots}}]{probbb}{probmaxmod}
\label{prob:maxmod}
	What is the tree of order $n$ with the Wiener root of
	largest modulus?
\end{restatable}

\begin{restatable}[{\cite[Problem 5.5]{brown2018roots}}]{probbb}{probrealroots}
\label{prob:realroots}
	Which graphs have the property that their Wiener roots are
	all real? Which trees have this property?
\end{restatable}

In this paper we answer Problems \ref{prob:real}, \ref{prob:complex},
and \ref{prob:maxmod} from
\cite{brown2018roots} and provide a construction related to
Problem \ref{prob:realroots}.

\subsection{Outline}
Our main results are Theorems \ref{thm:real}, \ref{thm:complex},
and \ref{thm:maxmod}. 
In Section \ref{real}, we prove
Theorem \ref{thm:real} which solves Problem \ref{prob:real}.

\begin{restatable}{thmm}{real}
\label{thm:real}
The closure of the collection of real Wiener roots of 
trees is $(-\infty, 0]$.
\end{restatable}

The next theorem, which we prove in Section \ref{complex}, answers Problem \ref{prob:complex}.

\begin{restatable}{thmm}{complex}
\label{thm:complex}
The collection of Wiener roots of trees is dense in $\CC$.
\end{restatable}

In Section \ref{maxmod}, we prove Theorem \ref{thm:maxmod}
which solves Problem \ref{prob:maxmod}. 
The tree $T_n$ is defined in
Figure \ref{fig:Tnclone}, and we also show that the maximum modulus
of its roots is between $2n-15$ and $2n-16$ for $n$ large.

\begin{figure}[h]
\centering
\begin{tikzpicture}
	\fill (-3,0) circle(2.5pt);
	\fill (-2,0) circle(2.5pt);
	\fill (-1,0) circle(2.5pt);
	\fill (0,0) circle(2.5pt);
	\fill (1,0) circle(2.5pt);
	\fill (2,0) circle(2.5pt);
	\fill (3,0) circle(2.5pt);
	\fill (0,0.8) circle(2.5pt);
	\fill (-1,1.7) circle(2.5pt);
	\fill (-0.3,1.7) circle(2.5pt);
	\node at (0.3,1.7) {$\dots$};
	\node at (0, 1.95) {$\overbrace{\hspace{2.1cm}}$};
	\node at (0, 2.3) {$n - 8$};
	\fill (1,1.7) circle(2.5pt);

	\draw (-3,0) -- (-2,0) -- (-1,0) -- (0,0) -- (1,0) -- (2,0) -- (3,0);
	\draw (0,0.8) -- (0,0);
	\draw (-1,1.7) -- (0,0.8);
	\draw (-0.3, 1.7) -- (0, 0.8);
	\draw (1, 1.7) -- (0, 0.8);
\end{tikzpicture}
\caption{The tree $T_n$ with maximum modulus Wiener root.}
\label{fig:Tnclone}
\end{figure}
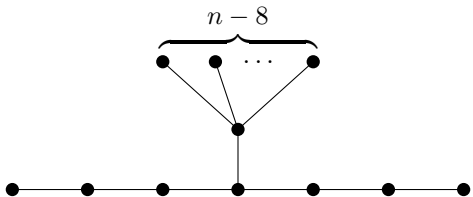

\begin{restatable}{thmm}{maxmod}
\label{thm:maxmod}
For $n \ge 31$,  
the unique tree of order $n$ with a Wiener root of maximum modulus
is $T_n$.
\end{restatable}

The authors of \cite{brown2018roots}
mention a construction for trees of arbitrarily
large diameter with all real Wiener roots. This
construction is incorrect. However, in Section \ref{realroots}
we show that there are
indeed trees of any diameter with all real Wiener roots.

\begin{restatable}{propp}{proprealroots}
\label{prop:realroots}
For any $D \ge 3$, there exists a tree with diameter $D$
whose Wiener polynomial has all real roots.
\end{restatable}

Finally, in Section \ref{open}, we present a number of open problems.

\section{Density of real Wiener roots}
\label{real}
In this section we prove Theorem \ref{thm:real}.

\real*
\begin{proof}
	By \cite[Theorem 3.3]{brown2018roots}, the closure
	contains $(-\infty, -1]$, so we just need to show that
	it also contains $[-1,0]$.

	Consider the tree $T_{k,n}$ given by Figure \ref{fig:Tkn}.
	\begin{figure}[h]
	\centering
	\begin{tikzpicture}
		\fill (-2,0) circle(2.5pt);
		\fill (-1, 0) circle(2.5pt);
		\node at (-0.5,0) {$\dots$};
		\node at (-0.52, -0.28) {$\underbrace{\hspace{2.8cm}}$};
		\node at (-0.5, -0.6) {$k$ edges};
		\fill (0,0) circle(2.5pt);
		\fill (1, 0) circle(2.5pt);
		\fill (2, 0.8) circle(2.5pt);
		\fill (2, 0.25) circle(2.5pt);
		\node at (2, -0.18) {$\vdots$};
		\fill (2, -0.8) circle(2.5pt);
		\draw (-2,0) -- (-1, 0);
		\draw (0,0) -- (1,0);
		\draw (1,0) -- (2, 0.8);
		\draw (1,0) -- (2, 0.25);
		\draw (1, 0) -- (2, -0.8);
		\node at (2, 0) {$\left. \begin{array}{c} 
		\phantom{x} \\ \phantom{x} \\ \phantom{x} \\
		\phantom{x}
		\end{array}\right\}$};
		\node at (2.6, 0) {$n$};
	\end{tikzpicture}
	\caption{The tree $T_{k,n}$.}
	\label{fig:Tkn}
	\end{figure}
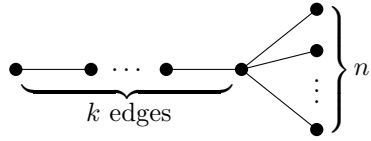
	Let $W_{k,n}$ denote the reduced Wiener polynomial of 
	$T_{k,n}$.

	We have for $k \ge 1$
	\begin{align*}
		W(k,n,x) &\coloneqq W_{k,n}(x) = (k+n) + \left( \tfrac{n^2+n}{2} + k -1 \right)
		x + (n+k-2)x^2 + \cdots + nx^k \\
		&= (\tfrac x2) n^2 + (1 + \tfrac{x}{2} + x^2 + \cdots
		+ x^k)n + (k + (k-1)x + \cdots + x^{k-1}).
	\end{align*}
	Observe that this is a polynomial in $n$ and can hence be
	defined for all real numbers $n$.
	Let
	\[
		n(x,k) = \frac{-R 
		- \sqrt{R^2 - 2x S}}{x},
	\]
	where $R = 1 + \tfrac x2 + x^2 + \cdots + x^k$ and
	$S = k + (k-1)x + \cdots + x^{k-1}$, so that
	$W(k,n(x,k),x) = 0$.

	We are concerned with values of $x$ in $(-1,0)$, in
	which case
	\[
		0 < R = \frac{x^{k+1}-1}{x-1} - \frac x2 <
		\frac{2}{1-x} - \frac{x}{2} \eqqcolon C_0(x)
	\]
	and
	\[
		S = \frac{k}{1-x} + \frac{x^{k+1}-x}{(1-x)^2} > 0.
	\]
	In particular, $n(x,k) \in \RR_{>0}$ for all
	$k \in \ZZ_{>0}$, $x \in (-1,0)$. 

	Let $n'(x,k) = \sqrt k f(x)$ where $f(x) = -\frac1x
	\sqrt{\frac{2x}{x-1}}$. We will show that for any
	$\delta > 0$, there exists $N_{x,\delta}$ such that
	\[
		k > N_{x,\delta} \implies |n(x,k) - n'(x,k)|
		< \delta - \frac{C_0(x)}{x}.
	\]
	We have
	\begin{align*}
		| n(x,k) - n'(x,k) |  &\le
		-\frac{R}{x} - \frac{\left| -\sqrt{R^2 - 2xS} + \sqrt{\frac{2kx}{x-1}}\right|}{x} \\
		&\le -\frac{C_0(x)}{x}
		-\frac1x \left(
		\sqrt{\frac{2kx}{x-1} + \frac{2x^2(1 - x^k)}{(1-x)^2} + R^2} -  \sqrt{\frac{2kx}{x-1}}\right) \\
		&\le -\frac{C_0(x)}{x} -\frac1x \frac{(C_0(x)^2 + 4x^2)\sqrt{x-1}}{2\sqrt{2kx}}.
	\end{align*}
	(The last inequality comes from the fact that
	$\sqrt{a + b} - \sqrt a < b/(2\sqrt a)$.)
	It is clear that if $k$ is large enough, the second term
	is less than $\delta$. Fix $\delta = 1$.

	Let $\epsilon > 0$ with $x + \epsilon < 0$
	We necessarily have $f(x) \neq f(x + \epsilon)$, so
	for $k > N_{x, \delta}, N_{x+\epsilon,\delta}$ large enough, we have
	\begin{align*}
		|n'(x,k) - n'(x+\epsilon, k)| &= \sqrt k(f(x) - f(x+\epsilon)) \\
		&> 1 + \left(\delta - \frac{C_0(x)}{x}\right)
	+ \left(\delta - \frac{C_0(x+\epsilon)}{x + \epsilon}\right).
	\end{align*}
	This implies that
	$|n(x,k) - n(x+\epsilon, k)| > 1$. Since $n(x,k)$ is
	a continuous function in $x \in (-1, 0)$ for fixed $k$,
	there exists $0 < \epsilon_0 < \epsilon$ such that
	$n(x+\epsilon_0, k)$ is a positive integer.
	Then $x + \epsilon_0$ is a Wiener root of
	$T_{k,n(x+\epsilon_0, k)}$. Since this holds for
	arbitrarily small $\epsilon$, $x$ is in the closure of
	the collection of real Wiener roots of trees.
\end{proof}

\section{Density of complex Wiener roots}
\label{complex}
In this section we prove Theorem \ref{thm:complex}.

\complex*
\begin{proof}
	Let $k, n \in \ZZ_{>0}$, and let $a$ be a sequence
	$a_1, \dots, a_n$ of positive integers. Let $T = T_{a,k,n}$
	be the graph shown in Figure \ref{fig:Gakn}.
	For a positive integer $c$, let $ca$ denote the
	sequence $ca_1,\dots, ca_n$. Let $W_{a,k,n}$ denote
	the reduced Wiener polynomial of $T_{a,k,n}$.

	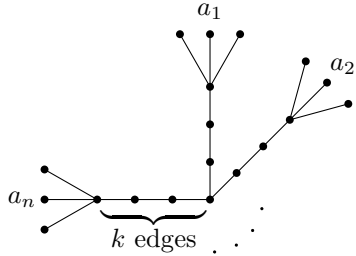
\begin{figure}[h]
	\centering
	\begin{tikzpicture}
		\fill (0,0) circle(1.5pt);
		\fill (0,0.5) circle(1.5pt);
		\fill (0,1) circle(1.5pt);
		\fill (0,1.5) circle(1.5pt);
		\fill (0,2.2) circle(1.5pt);
		\fill (0.4,2.2) circle(1.5pt);
		\fill (-0.4,2.2) circle(1.5pt);
		\draw (0,0) -- (0,0.5) -- (0,1) -- (0, 1.5) -- (0, 2.2);
		\draw (-0.4, 2.2) -- (0,1.5) -- (0.4, 2.2);
		\fill[rotate=90] (0,0.5) circle(1.5pt);
		\fill[rotate=90] (0,1) circle(1.5pt);
		\fill[rotate=90] (0,1.5) circle(1.5pt);
		\fill[rotate=90] (0,2.2) circle(1.5pt);
		\fill[rotate=90] (0.4,2.2) circle(1.5pt);
		\fill[rotate=90] (-0.4,2.2) circle(1.5pt);
		\draw[rotate=90] (0,0) -- (0,0.5) -- (0,1) -- (0, 1.5) -- (0, 2.2);
		\draw[rotate=90] (-0.4, 2.2) -- (0,1.5) -- (0.4, 2.2);
		\fill[rotate=-45] (0,0.5) circle(1.5pt);
		\fill[rotate=-45] (0,1) circle(1.5pt);
		\fill[rotate=-45] (0,1.5) circle(1.5pt);
		\fill[rotate=-45] (0,2.2) circle(1.5pt);
		\fill[rotate=-45] (0.4,2.2) circle(1.5pt);
		\fill[rotate=-45] (-0.4,2.2) circle(1.5pt);
		\draw[rotate=-45] (0,0) -- (0,0.5) -- (0,1) -- (0, 1.5) -- (0, 2.2);
		\draw[rotate=-45] (-0.4, 2.2) -- (0,1.5) -- (0.4, 2.2);
		\node at (0,2.5) {$a_1$};
		\node at (45:2.5) {$a_2$};
		\node at (180:2.5) {$a_n$};

		\fill[rotate=-10] (0.7,0) circle(0.6pt);
		\fill[rotate=-36] (0.7,0) circle(0.6pt);
		\fill[rotate=-60] (0.7,0) circle(0.6pt);
		\fill[rotate=-85] (0.7,0) circle(0.6pt);

		\node at (-0.75,-0.22) {$\underbrace{\hspace{1.4cm}}$};
		\node at (-0.75, -0.55) {$k$ edges};
	\end{tikzpicture}
	\caption{The tree $T_{a,k,n}$.}
	\label{fig:Gakn}
	\end{figure}

	We have $D(T) = 2k+2$,
	\[ d_{2k+2}(T) =
	\sum_{i \neq j} a_ia_j, \;\; d_2(T) = \sum_{i=1}^n
	\frac{a_i^2}{2} + \ell_2(a,k,n),
	\] 
	and for 
	$i \neq 2, 2k+2$ we have $d_i(T) = \ell_i(a,k,n)$, where the
	functions $\ell_i(a,k,n)$ are linear in the $a_i$.

	Let $M_{a,k,n} = \frac{W_{a,k,n}}{d_{2k+2}(T)}$
	and $R(a) = \frac{\sum_{a_i^2}}{2\sum_{i\neq j}a_ia_j}$.
	Then
	\[
		M_{a,k,n}(x) = x^{2k+1}
		+ \frac{\ell_{2k}(a,k,n)}{\sum a_ia_j} x^{2k}
		+ \cdots 
		+ \left(R(a) + \frac{\ell_2(a,k,n)}{\sum a_ia_j} \right)x
		+ \frac{\ell_1(a,k,n)}{\sum a_ia_j}.
	\]
	Since the $\ell_i$ are linear, we see that the
	sequence of polynomials $M_{ca, k, n}$ for
	$c = 1, 2, \dots$ approaches the polynomial $x^{2k+1}
	+ R(a) x$ under the norm
	$\| c_mx^m + \cdots + c_0 \| = \max_{0 \le i \le m} |c_i|$.

	For fixed $n$, it is straightforward to show that
	the values $R(a)$ can take are dense in 
	$[\frac{1}{n-1}, \infty)$ (the minimum is achieved when
	all the $a_i$ are equal), so the values $R(a)$ can
	take are dense in $[0, \infty)$ as $n$ and $a$ vary. 
	It is also straightforward to show that the roots of 
	$x^{2k} + r$ for $k \in \ZZ_{>0}, r \in \RR_{> 0}$ are
	dense in $\CC$. Thus, by continuity of roots
	(see for example \cite{harris1987shorter}) the roots of
	$M_{a,k,n}(x)$, which are the Wiener roots of $T_{a,k,n}$
	are dense in $\CC$ as $a$, $k$, $n$ vary.
\end{proof}

\section{Maximum modulus Wiener root of trees}
\label{maxmod}
Let $T_n$ be the tree shown in Figure \ref{fig:Tn},
and let $W_n$ be the reduced Wiener polynomial of $T_n$.
In this section, we show that for $n \ge 31$,
the unique tree of
order $n$ with the Wiener root of largest modulus is $T_n$.

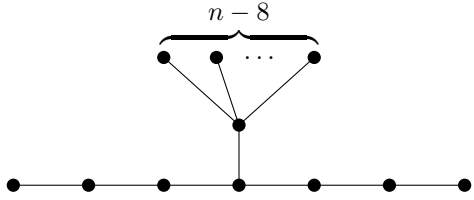
\begin{figure}[h]
\centering
\begin{tikzpicture}
	\fill (-3,0) circle(2.5pt);
	\fill (-2,0) circle(2.5pt);
	\fill (-1,0) circle(2.5pt);
	\fill (0,0) circle(2.5pt);
	\fill (1,0) circle(2.5pt);
	\fill (2,0) circle(2.5pt);
	\fill (3,0) circle(2.5pt);
	\fill (0,0.8) circle(2.5pt);
	\fill (-1,1.7) circle(2.5pt);
	\fill (-0.3,1.7) circle(2.5pt);
	\node at (0.3,1.7) {$\dots$};
	\node at (0, 1.95) {$\overbrace{\hspace{2.1cm}}$};
	\node at (0, 2.3) {$n - 8$};
	\fill (1,1.7) circle(2.5pt);

	\draw (-3,0) -- (-2,0) -- (-1,0) -- (0,0) -- (1,0) -- (2,0) -- (3,0);
	\draw (0,0.8) -- (0,0);
	\draw (-1,1.7) -- (0,0.8);
	\draw (-0.3, 1.7) -- (0, 0.8);
	\draw (1, 1.7) -- (0, 0.8);
\end{tikzpicture}
\caption{The tree $T_n$ with maximum modulus Wiener root.}
\label{fig:Tn}
\end{figure}

\begin{lemma}
\label{lem:bigroot}
	For $n \ge 10$,  the tree $T_n$ has a real Wiener root $-(2n-15) < r < -(2n-16)$.
\end{lemma}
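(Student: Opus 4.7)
The plan is to compute $\widehat W(T_n; x)$ explicitly, then evaluate it at the two endpoints and apply the Intermediate Value Theorem. First I would enumerate vertex pairs in $T_n$ by distance. Writing the tree as a path $p_{-3} p_{-2} \cdots p_3$ together with a pendant $v$ attached at $p_0$ and leaves $\ell_1, \ldots, \ell_{n-8}$ attached at $v$, and partitioning vertex pairs by combinatorial type (path-path, path-$v$, path-leaf, $v$-leaf, leaf-leaf), I obtain
\[
\widehat W(T_n; x) = (n-1) + \tfrac{n^2-15n+70}{2}\,x + (2n-10)x^2 + (2n-11)x^3 + (2n-14)x^4 + x^5.
\]

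Second, I would exploit the fact that the coefficient of $x^4$ is exactly $2n-14$, so that at $x=-(2n-15)$ the two leading terms collapse to $(2n-15)^4$, and at $x=-(2n-16)$ they collapse to $2(2n-16)^4$. A further telescoping of the cubic term with these leading blocks (using $(2n-15)^4 - (2n-11)(2n-15)^3 = -4(2n-15)^3$ and $2(2n-16)^4 - (2n-11)(2n-16)^3 = (2n-21)(2n-16)^3$) reduces both evaluations to explicit polynomials in $n$:
\begin{align*}
\widehat W(T_n; -(2n-15)) &= -4(2n-15)^3 + (2n-10)(2n-15)^2 - \tfrac{(n^2-15n+70)(2n-15)}{2} + (n-1), \\
\widehat W(T_n; -(2n-16)) &= (2n-21)(2n-16)^3 + (2n-10)(2n-16)^2 - \tfrac{(n^2-15n+70)(2n-16)}{2} + (n-1).
\end{align*}

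Third, I would verify the two sign conditions for all $n \ge 10$. The first expression expands to a cubic in $n$ with leading coefficient $-25$ and value $-291$ at $n=10$; its derivative is a downward parabola already negative at $n=10$, so the cubic is monotonically decreasing on $[10,\infty)$ and hence strictly negative there. The second equals $65$ at $n=10$, and for $n \ge 11$ the factor $(2n-21)$ is positive, so it suffices to verify that $(2n-10)(2n-16)^2$ dominates $\tfrac{(n^2-15n+70)(2n-16)}{2}$, which reduces to showing $7n^2 - 89n + 250 > 0$ for $n \ge 11$. The Intermediate Value Theorem then yields a real root of $\widehat W(T_n; x)$ in $(-(2n-15), -(2n-16))$.

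The main obstacle is the delicate cancellation at $x=-(2n-15)$: the top two terms of $\widehat W(T_n; x)$ nearly cancel, so the root's precise location is pinned down only after the $x^3$ and lower terms are accounted for. Any off-by-one slip in the distance counts, especially in the linear-in-$n$ coefficient $2n-14$ of $x^4$, would flip the sign at the boundary case $n=10$ and invalidate the whole argument; the algebra must be executed with care.
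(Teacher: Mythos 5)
Your proposal is correct and follows essentially the same route as the paper: compute the reduced Wiener polynomial $(n-1) + \tfrac{n^2-15n+70}{2}x + (2n-10)x^2 + (2n-11)x^3 + (2n-14)x^4 + x^5$, check that it is negative at $-(2n-15)$ and positive at $-(2n-16)$ for $n \ge 10$, and conclude by the Intermediate Value Theorem. The only difference is cosmetic: you organize the sign check via the telescoping factorizations $-4(2n-15)^3$ and $(2n-21)(2n-16)^3$, while the paper simply expands the evaluations into explicit polynomials in $n$; both computations agree.
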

\begin{proof}
	We calculate
	\[
		W_n(x) = (n-1) + \left(\frac{n^2-15n+70}{2}\right)x
		+ (2n-10)x^2 + (2n-11)x^3 + (2n-14)x^4 + x^5.
	\]
	Thus,
	\[
		W_n(-(2n-16))
		= 16n^4 - 545n^3 + 6959n^2 - 39485n + 84015 > 0
	\]
	for $n \ge 10$. 
	Also,
	\[
		W_n(-(2n-15))
		= -25n^3 + \frac{1165 n^2}{2} - \frac{9063 n}{2}
		+ 11774 < 0
	\]
	for $n \ge 9$.
	This implies that $W_n$ has a root $-(2n-15) < r < -(2n-16)$.
\end{proof}

As in \cite{brown2018roots}, we will use the following theorem
to bound the roots of polynomials.

\begin{theorem}[Enestr\"om--Kakeya \cite{kakeya1912limits}]
	If $f(x) = a_0 + a_1x + \cdots + a_nx^n$ has positive 
	real coefficients, then all complex roots of $f$ lie
	in the annulus
	\[
		r \le |z| \le R
	\]
	where $r = \min\left\{ \frac{a_i}{a_{i+1}} \colon 0 \le i
	\le n-1 \right\}$ and $R = \max \left\{ \frac{a_i}{a_{i+1}}
	\colon 0 \le i \le n -1 \right\}$.
\end{theorem}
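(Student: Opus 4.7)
The plan is to establish each bound by the same linear-factor trick: multiply $f$ by $(R - x)$, respectively $(x - r)$, so that the resulting polynomial has exactly one negative coefficient and all others non-negative, then pit the triangle inequality at a putative extremal root against the sign of that polynomial on the positive real axis.

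For the upper bound, consider $g(x) = (R - x) f(x) = R a_0 + \sum_{i=1}^{n} (R a_i - a_{i-1}) x^i - a_n x^{n+1}$. Since $R \ge a_{i-1}/a_i$ for every $i$, each interior coefficient $R a_i - a_{i-1}$ is non-negative. Suppose $f(z) = 0$ with $|z| > R$. Then $z \ne R$, so $(R-z) f(z) = 0$ forces
\[
a_n z^{n+1} = R a_0 + \sum_{i=1}^n (R a_i - a_{i-1}) z^i,
\]
and the triangle inequality gives $a_n |z|^{n+1} \le R a_0 + \sum_{i=1}^n (R a_i - a_{i-1}) |z|^i$. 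But the right side minus the left is precisely $(R - |z|) f(|z|)$, which is strictly negative because $|z| > R$ and $f(|z|) > 0$ (all $a_i$ are positive). The contradiction yields $|z| \le R$.

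A mirror argument handles the lower bound. Set $h(x) = (x - r) f(x) = a_n x^{n+1} + \sum_{j=1}^n (a_{j-1} - r a_j) x^j - r a_0$; now $r \le a_{j-1}/a_j$ forces $a_{j-1} - r a_j \ge 0$, so only the constant term is negative. A root $f(z) = 0$ satisfies $z \ne r$ since $f(r) > 0$, so the same manoeuvre yields
\[
r a_0 \le a_n |z|^{n+1} + \sum_{j=1}^n (a_{j-1} - r a_j) |z|^j.
\]
The right-hand side, viewed as a polynomial in $|z|$ with non-negative coefficients and positive leading coefficient $a_n$, is strictly increasing on $[0,\infty)$, and at $|z| = r$ it equals $h(r) + r a_0 = r a_0$. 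Therefore $|z| < r$ would push the right-hand side strictly below $r a_0$, contradicting the inequality.

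There is no serious conceptual obstacle; the whole proof is two applications of the same idea. The one piece of bookkeeping worth flagging is to ensure that the unique negative coefficient lands in the correct slot --- the leading coefficient for the upper bound, the constant term for the lower bound --- so that the associated real-variable polynomial in $|z|$ is genuinely monotone on $[0,\infty)$ and attains the required value at the critical radius.
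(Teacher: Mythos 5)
Your proof is correct. Note that the paper does not prove this statement at all --- it is the classical Enestr\"om--Kakeya theorem, quoted with a citation to \cite{kakeya1912limits} and used as a black box --- so there is no in-paper argument to compare against; your two-sided linear-factor argument (multiplying by $R-x$ and by $x-r$ to make all but one coefficient non-negative, then playing the triangle inequality at a hypothetical root against the sign of the resulting polynomial at $|z|$) is the standard proof of this result and is carried out correctly, including the monotonicity check needed for the lower bound.
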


\begin{definition}
	We say that a tree is \emph{bad} if it is equal either to
	$T_n$ or one of the trees $T_n'$ or $T_n''$ in Figure \ref{fig:baddiam6} or 
	\ref{fig:baddiam4}, and it is \emph{good} otherwise.

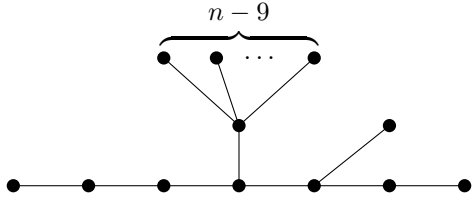
\begin{figure}[h]
\centering
\begin{tikzpicture}
	\fill (-3,0) circle(2.5pt);
	\fill (-2,0) circle(2.5pt);
	\fill (-1,0) circle(2.5pt);
	\fill (0,0) circle(2.5pt);
	\fill (1,0) circle(2.5pt);
	\fill (2,0) circle(2.5pt);
	\fill (3,0) circle(2.5pt);
	\fill (0,0.8) circle(2.5pt);
	\fill (-1,1.7) circle(2.5pt);
	\fill (-0.3,1.7) circle(2.5pt);
	\node at (0.3,1.7) {$\dots$};
	\node at (0, 1.95) {$\overbrace{\hspace{2.1cm}}$};
	\node at (0, 2.3) {$n - 9$};
	\fill (1,1.7) circle(2.5pt);

	\draw (-3,0) -- (-2,0) -- (-1,0) -- (0,0) -- (1,0) -- (2,0) -- (3,0);
	\draw (0,0.8) -- (0,0);
	\draw (-1,1.7) -- (0,0.8);
	\draw (-0.3, 1.7) -- (0, 0.8);
	\draw (1, 1.7) -- (0, 0.8);
	\fill (2,0.8) circle(2.5pt);
	\draw (1,0) -- (2,0.8);
\end{tikzpicture}
\caption{Bad tree $T_n'$ with diameter $6$.}
\label{fig:baddiam6}
\end{figure}

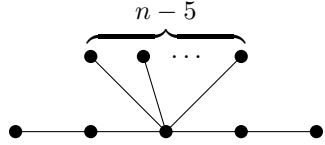
\begin{figure}[h]
\centering
\begin{tikzpicture}
	\fill (-2,0) circle(2.5pt);
	\fill (-1,0) circle(2.5pt);
	\fill (0,0) circle(2.5pt);
	\fill (1,0) circle(2.5pt);
	\fill (2,0) circle(2.5pt);
	\fill (-1,1) circle(2.5pt);
	\fill (-0.3,1) circle(2.5pt);
	\node at (0.3,1) {$\dots$};
	\node at (0, 1.25) {$\overbrace{\hspace{2.1cm}}$};
	\node at (0, 1.6) {$n - 5$};
	\fill (1,1) circle(2.5pt);

	\draw (-2,0) -- (-1,0) -- (0,0) -- (1,0) -- (2,0);
	\draw (-1,1) -- (0,0);
	\draw (-0.3, 1) -- (0, 0);
	\draw (1, 1) -- (0, 0);
\end{tikzpicture}
\caption{Bad tree $T_n''$ with diameter $4$.}
\label{fig:baddiam4}
\end{figure}
\end{definition}

Lemma \ref{lem:induct} follows from the proof of \cite[Lemma 2.3]{brown2018roots}. We will modify this
proof to show Lemma \ref{lem:2n-16}.

\begin{lemma}
\label{lem:induct}
	Let $T$ be a tree with diameter $D$, let $v$
	be a leaf on a diametral path of $T$, and let $T' = T - v$.
	Let $2 \le k < D(T')$. Then
	\[
		\frac{d_k(T)}{d_{k+1}(T)} \le 
		\max\left\{\frac{d_k(T')}{d_{k+1}(T')}, n - D\right\}.
	\]
	Furthermore, if $T$ has a unique diametral path between leaves
	$u$ and $v$, then every path of length $D - 1$
	contains exactly one of $u$ and $v$.
\end{lemma}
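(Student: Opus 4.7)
The plan is to split pairs of vertices in $T$ according to whether $v$ is involved. Writing $p_j(v)$ for the number of vertices at distance $j$ from $v$ in $T$, one has $d_k(T) = d_k(T') + p_k(v)$ and the analogous equality at index $k+1$. Applying the elementary mediant inequality $\tfrac{a+c}{b+d} \le \max\bigl(\tfrac{a}{b}, \tfrac{c}{d}\bigr)$ then reduces the first bound to showing $p_k(v)/p_{k+1}(v) \le n - D$. Since $v$ is an endpoint of a diametral path, that path supplies at least one vertex at each distance $0, 1, \ldots, D$ from $v$, so $\sum_j p_j(v) = n$ forces $p_k(v) \le n - D$; and $p_{k+1}(v) \ge 1$ because $k+1 \le D(T') \le D$.

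For the second statement, I would assume $T$ has a unique diametral path $u = p_0, p_1, \ldots, p_D = v$. No path of length $D-1$ can contain both $u$ and $v$, since the unique $u$-$v$ path in the tree already has length $D$. Now suppose, for contradiction, that there is a path $P$ of length $D-1$ with endpoints $x, y \notin \{u, v\}$. If $x$ had a neighbor not on $P$, appending it would produce a diametral path, and uniqueness would force the opposite endpoint $y$ of that diametral path to lie in $\{u, v\}$, a contradiction; so $x$, and symmetrically $y$, must be a leaf.

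Next, let $a = p_j$ and $b = p_l$ be the closest vertices of the diametral path to $x$ and $y$, with $\alpha = d(x, a)$ and $\beta = d(y, b)$. Because $u, v$ are leaves, one has $1 \le j, l \le D-1$. If $\alpha + j \ge D$, then $x \to a \to u$ would be a second diametral path, which contradicts uniqueness; similar reasoning with $v$ in place of $u$ yields $\alpha \le \min(j, D-j) - 1$, and analogously $\beta \le \min(l, D-l) - 1$. When $a \ne b$, the $x$-$y$ path in $T$ traverses the diametral-path segment $a \to b$, so $d(x,y) = \alpha + |l-j| + \beta = D - 1$; combining this with $\alpha + \beta \le (j-1) + (D - l - 1)$ (assuming $j \le l$) produces $-1 \le -2$, a contradiction. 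When $a = b$, the triangle inequality through $a$ gives $d(x,y) \le \alpha + \beta \le 2\bigl(\lfloor D/2 \rfloor - 1\bigr) \le D - 2 < D - 1$, again a contradiction.

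I expect the main subtlety to be the $a = b$ case, where the unique $x$-$y$ path in $T$ need not actually visit $a$ (the two leaves may share a deeper common ancestor in the subtree hanging off $a$), but the triangle-inequality bound $d(x,y) \le \alpha + \beta$ still holds via $a$. Apart from this, the argument is a direct extension of the pair-splitting strategy used in \cite[Lemma 2.3]{brown2018roots}, with the extra bookkeeping forced by the uniqueness assumption on the diametral path.
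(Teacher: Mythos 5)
Your proof is correct and follows essentially the same route as the paper's: you split $d_k(T)$ into pairs within $T'=T-v$ and pairs involving $v$, and bound the latter ratio using that there are at most $n-D$ and at least $1$ vertices at each relevant distance from $v$ (the paper phrases this via counts from the neighbor of $v$ and a short telescoping estimate rather than the mediant inequality, but the content is identical). Your argument for the second claim --- projecting the endpoints $x,y$ onto the unique diametral path and using uniqueness to force $d(x,y)\le D-2$ --- is likewise the paper's argument, with the degenerate case $a=b$ handled more explicitly than in the paper.
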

\begin{proof}
	Let $u$ be the neighbor of $v$ in $T$, and let
	$d_\ell(T',u)$ denote the number of paths of length
	$\ell \ge 1$ in $T'$ that have $u$ has an end-vertex.
	Note that $u$ is on a path of length $D-1$ in $T'$, so
	there are at most $n-1 - (D-1) = n - D$ vertices
	at distance $\ell$ from $u$ in $T'$ for
	$1 \le \ell \le D-1$. Thus we have
	$1 \le d_\ell(T',u) \le n - D$ for $1\le \ell \le D-1$.
	Let $s = \max\left\{ \frac{d_k(T')}{d_{k+1}(T')}, n-D\right\}$.
	Then for $2 \le k < D(T')$,
	\begin{align*}
		d_k(T) &= d_k(T') + d_{k-1}(T',u) \\
		&\le sd_{k+1}(T') + d_{k-1}(T',u) \\
		&= s[d_{k+1}(T) - d_k(T',u)] + d_{k-1}(T',u) \\
		&= sd_{k+1}(T) - sd_k(T',u) + d_{k-1}(T',u)\\
		&\le sd_{k+1}(T) - s + n-D \\
		&\le sd_{k+1}(T).
	\end{align*}
	For the second statement, suppose the path between
	two vertices $x$ and $y$ has length $D-1$.
	Let $P$ denote the path between $u$ and $v$.
	Let the shortest paths from $x$ and $y$ to $P$ meet $P$ at
	$w_x$ and $w_y$ respectively, and without loss of 
	generality, suppose that $u$, $w_x$, $w_y$, $v$ lie in that order
	on the path $P$. Then by the assumption
	that $P$ is the unique diametral path, we must 
	have $d(x,w_x) < d(u, w_x)$ and $d(y,w_y) < d(v,w_y)$.
	But then $d(x,y) \le d(u,v)-2 = D-2$, a contradiction.
\end{proof}

\begin{lemma}
\label{lem:2n-16}
	If $T$ is a good tree of order $n \ge 3$ and diameter $D$,
	then 
	\[ 	
		\frac{d_k(T)}{d_{k+1}(T)} \le \max\left\{\frac{3n-1}{2},
	2n-16\right\}
	\]
	for $1 \le k < D$. If $T$ is a bad tree with diameter $6$,
	then 
	\[
		\frac{d_k(T)}{d_{k+1}(T)} \le \max\left\{\frac{3n-1}{2},
	2n-14\right\}.	
	\]
\end{lemma}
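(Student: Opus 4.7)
The plan is to prove both assertions simultaneously by induction on $n$, using Lemma \ref{lem:induct} as the engine; small-$n$ base cases are verified directly (the target bound is quite slack for small $n$). In the inductive step I would choose a leaf $v$ on a diametral path of $T$, set $T' = T - v$, and split the argument by the value of $k$.

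For $k = 1$ the bound is almost trivial: $d_1(T) = n - 1$ and $d_2(T) \ge 1$ for $n \ge 3$ give $d_1(T)/d_2(T) \le n - 1 \le (3n - 1)/2$. For $2 \le k < D(T')$, Lemma \ref{lem:induct} gives
\[ \frac{d_k(T)}{d_{k+1}(T)} \le \max\left\{ \frac{d_k(T')}{d_{k+1}(T')}, \; n - D \right\}, \]
and the term $n - D$ is trivially $\le (3n-1)/2$. For the first term I would invoke the inductive hypothesis on $T'$: if $T'$ is good, it yields at most $\max\{(3n-4)/2, 2n - 18\}$; if $T'$ is one of the bad diameter-$6$ trees $T_{n-1}$ or $T_{n-1}'$, it yields at most $\max\{(3n-4)/2, 2n - 16\}$. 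Both fall safely below the target $\max\{(3n-1)/2, 2n - 16\}$. The one configuration the induction does not cover is $T' = T_{n-1}''$ (bad, diameter $4$), for which the lemma provides no bound; here I would enumerate the handful of trees $T$ whose leaf-deletion along a diametral path produces $T_{n-1}''$ and verify the bound for each by direct computation.

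The main obstacle is the terminal range $k = D - 1$ when $T$ has a unique diametral path, so that $D(T') = D - 1$ and Lemma \ref{lem:induct} does not apply at $k = D - 1$. Here $d_D(T) = 1$, and by the second statement of Lemma \ref{lem:induct} every path of length $D - 1$ contains exactly one of the two diametral endpoints $u_1, u_2$; counting gives $d_{D-1}(T) \le 2(n - D)$, which is at most $2n - 16$ as soon as $D \ge 8$. For $D \in \{3, 4, 5, 6, 7\}$ this simple bound is too weak, and I would proceed by a structural case analysis: any vertex $x$ with $d(u_1, x) = D - 1$ sits on a branch attached to the diametral path at some $w_i$, and the requirement that the diameter not exceed $D$ forces tight constraints on $i$ and on the branch length. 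Cataloguing the admissible attachment positions and branch sizes for each small $D$, I would argue that if $d_{D-1}(T) > 2n - 16$ then $T$ must match the structure of $T_n$, $T_n'$, or $T_n''$ — contradicting goodness. The same enumeration also verifies the bad-diameter-$6$ statement: direct computation gives $d_5(T_n) = 2n - 14$ and $d_5(T_n') = 2n - 15$, both within the claimed bound $\max\{(3n-1)/2, 2n - 14\}$. This small-diameter case analysis, while elementary, is the most delicate part of the proof.
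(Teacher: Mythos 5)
Your outline reproduces the architecture of the paper's own proof: induction on $n$, Lemma \ref{lem:induct} to push the bound from $T'=T-v$ to $T$ for $2\le k<D(T')$, direct verification of the few trees whose leaf-deletion yields $T_{n-1}''$, and a separate treatment of $k=D-1$ when the diametral path is unique, using the second statement of Lemma \ref{lem:induct} to get $d_{D-1}(T)\le 2(n-D)$, which suffices once $D\ge 8$. Up to that point the plan is sound and matches the paper.

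The gap is that the remaining case $k=D-1$ with $3\le D\le 7$ --- which is exactly where the constants $2n-16$ and $2n-14$ and the exceptional trees $T_n$, $T_n'$ come from --- is only announced (``I would argue \dots cataloguing the admissible attachment positions''), not carried out, and this is the entire substance of the lemma; everything before it is routine. To close it you need the following ingredients, which the paper supplies. Write $a$ (resp.\ $b$) for the number of vertices that are endpoints of two (resp.\ one) paths of length $D-1$, so $d_{D-1}(T)=2a+b$, $a+b\le n-D+1$ and $b\ge 2$. For $D$ odd ($D=5,7$) a parity argument kills the case outright: $d(u,x)+d(x,v)\equiv D \pmod 2$, so no vertex is at distance $D-1$ from both endpoints, hence $a=0$ and $d_{D-1}\le n-D+1\le \frac{3n-1}{2}$. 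For $D=3$ a unique diametral path forces a path, and for $D=4$ it forces $T_n''$, which is bad and hence excluded. The only substantive case is $D=6$ with $a\ge 1$: there one must observe that a vertex at distance $5$ from both $u$ and $v$ is necessarily a leaf whose neighbour is at distance $4$ from both, which improves the count to $a+b\le n-6$, giving $2a+b\le 2n-14$; then an equality analysis shows $2a+b=2n-14$ forces $T=T_n$ and $2a+b=2n-15$ forces $T=T_n'$, so good trees satisfy $d_5\le 2n-16$ while the bad diameter-$6$ trees satisfy $d_5\le 2n-14$. Without this structural step your catalogue is an unproven claim, and in particular nothing in your write-up explains why the threshold is $2n-16$ rather than, say, $2n-14$ or $2(n-D)$.
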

\begin{proof}
	We induct on $n$. The case $n = 3$ is clear.
	Let $T$ be a tree of order $n > 3$ and assume that
	the lemma holds for all trees with $n-1$ vertices.
	Note that $\frac{d_1}{d_2} < 2 < \frac{3n-1}{2}$,
	so assume $k \ge 2$. Note that we will always have
	\[
		\max\left\{\frac{3(n-1)-1}{2}, 2(n-1)-16\right\}
		\ge n - D.
	\]

	If $T$ contains at least $2$ diametral paths, then
	there exists a leaf $v$ on a diametral path such that
	$T' = T - v$ has diameter $D$. If $T'$ is good or 
	bad with diameter $6$, then Lemma \ref{lem:induct}
	and the above inequality shows that
	\[
		\frac{d_k(T)}{d_{k+1}(T)} 
	\le \max \left\{ \frac{3n-4}{2}, 2(n-1)-14 \right\}
	\] 
	for $2 \le k < D$, so we are done. If $T'$ is bad with diameter $4$,
	then there are only two possibilities for $T$ (it is equal to
	$T_n''$ with a leaf attached either at one of the second-to-last 
	vertices on the	path of length $4$, or at one of the
	$n-5$ leaves), and we
	can check directly that they have Wiener polynomials
	\[
		(n-1)x + (\tfrac{n^2 - 9n + 28}{2})x^2
		+ (3n-15)x^3 + 2x^4
	\]
	and
	\[
		(n-1)x + (\tfrac{n^2-9n + 26}{2})x^2 + (3n-15)x^3
		+ 3x^4.
	\]
	It is easy to check that these coefficients
	satisfy $\frac{d_k(T)}{d_{k+1}(T)} \le \frac{3n-1}{2}$.

	Now suppose $T$ contains exactly $1$ diametral path,
	and let $u$	and $v$ be its endpoints.
	Then $T' = T - v$ has diameter $D - 1$.
	If $T'$ is bad with diameter $4$, then $T$ must be
	equal to $T_{n-1}''$ with a leaf attached to one of
	the endpoints of the path of length $4$, and it 
	has Wiener polynomial
	\[
		(n-1)x + (\tfrac{n^2-9n+26}{2})x^2 + (2n-9)x^3
		+ (n-4)x^4 + x^5.	
	\]
	It is straightforward to show that this polynomial satisfies
	the condition.

	If $T'$ is good or bad with diameter $6$, then by the
	same argument as above,
	\[
		\frac{d_k(T)}{d_{k+1}(T)} 
	\le \max \left\{ \frac{3n-4}{2}, 2(n-1)-14 \right\}
	\]
	for $2 \le k < D - 1$. 

	Thus, we only need to show that
	$d_{D-1}(T) \le \max \{ \frac{3n-1}{2}, 2n-16 \}$ if $T$
	is a good tree with a unique diametral path, and that
	$d_{D-1}(T)\le 2n-14$ if $T$ is a bad tree
	with diameter $6$. The second statement can be checked
	directly, since
	we have $d_{D-1}(T_n) = 2n-14$ and $d_{D-1}(T_n') = 2n-15$.
	For the first, note that if $T$ has diameter $2$ or $3$
	and has a unique diametral path, then it is a path and
	it is easy to check that $T$ satisfies the condition.
	If $T$ has diameter $4$ and a unique diametral path,
	then it is bad. Thus, we can assume that $D(T) > 4$. 

	Let $a$ be the number of vertices of $T$ (other than
	$u$ and $v$) which are
	endpoints of two paths of length $D-1$, and let $b$
	be the number of vertices which are endpoints of $1$ path 
	of length $D-1$.
	We have $a + b \le n - D + 1$, since $D-1$ of
	the vertices on the path from $u$ to $v$ are not counted at all, and $b \ge 2$, since two vertices on
	the path from $u$ to $v$ are endpoints of exactly
	$1$ path of length $D-1$.
	Thus, $d_{D-1}(T) = 2a + b \le 2(n-D)$, so we can
	assume $5 \le D \le 7$.

	If $D$ is odd, we must have $a = 0$ since every path
	of length $D-1$ must contain exactly one of $u$ and $v$,
	yet for any vertex $x$, $d(u,x) \neq d(v,x)$ by parity
	considerations.
	Thus, $2a + b \le n - D + 1 \le \frac{3n-1}{2}$.

	Thus, the only remaining case is $D = 6$. If $a = 0$ we
	are done, so assume $a \ge 1$. If a vertex $x$
	has distance $5$ from both $u$ and $v$, then $x$ must be
	a leaf whose neighbor $y$ has distance $4$
	from both $u$ and $v$. Then $a + b \le n - 6$
	so $2a + b \le 2(n-8) + 2 = 2n-14$. The only way we
	can have $2a + b = 2n-14$ is if $a = n-8$. This means
	that every other vertex must be a leaf adjacent to $y$,
	but then $T = T_n$ is bad. The only way we can have
	$2a + b = 2n-15$ is if $a = n-9$, $b = 3$. But then
	$T = T_n'$, so it is bad. Thus, $2a + b \le 2n-16$.
\end{proof}

\begin{lemma}
\label{lem:baddiam4}
For $n \ge 10$,
the tree $T_n''$ does not have a Wiener root with modulus 
greater than $\left(1 + \frac{1}{\sqrt2}\right)n - 7$.
\end{lemma}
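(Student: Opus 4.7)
The plan is to apply Rouché's theorem on the circle $|x| = M$, where $M = (1 + \tfrac{1}{\sqrt 2})n - 7$. First, compute the reduced Wiener polynomial of $T_n''$ directly by counting pairs of vertices at each distance (distance $1$ gives the $n-1$ edges; distance $4$ gives only the pair of path endpoints; distance $3$ and $2$ are routine):
\[
p(x) := \widehat W(T_n''; x) = x^3 + (2n-8)x^2 + \tfrac{n^2-7n+16}{2}x + (n-1).
\]
Write $p(x) = xq(x) + c$ with $q(x) = x^2 + ax + b$, where $a = 2n-8$, $b = (n^2-7n+16)/2$, and $c = n-1$. The Rouché split will be $f := xq$ and $g := c$.

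For $n \ge 10$ the discriminant $a^2 - 4b = 2(n^2 - 9n + 16)$ is positive, so $q$ has two real roots $x_\pm = -(n-4) \pm B$, where $B = \sqrt{(n^2 - 9n + 16)/2}$; a short estimate using $n^2 - 7n + 16 > 0$ gives $B \le n-4$, so both roots are negative with $|x_\mp| = (n-4) \pm B$. Setting $A = n/\sqrt 2 - 3$, one computes $M - |x_\pm| = A \mp B$, and the key identity
\[
A^2 - B^2 = \tfrac{(9 - 6\sqrt 2)n}{2} + 1 > 0
\]
(valid since $9 > 6\sqrt 2$) yields $A > B$, so both roots of $q$ lie strictly inside $|x| < M$, as do the three zeros $0, x_+, x_-$ of $f$.

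Since $x_\pm < 0$, each factor $|x - x_\pm|$ of $|f|$ is minimized on the circle $|x| = M$ at the same point $x = -M$ (one writes $x = Me^{i\theta}$ and observes that $|x - x_\pm|^2$ is an affine increasing function of $-\cos\theta$), and hence so is their product. Therefore
\[
\min_{|x|=M} |f(x)| = M(M-|x_+|)(M-|x_-|) = M(A+B)(A-B) = M(A^2 - B^2).
\]
The desired hypothesis $M(A^2 - B^2) > n - 1 = |g|$ expands to a quadratic in $n$ with positive leading coefficient $\tfrac{6-3\sqrt 2}{4}$, which one verifies holds at $n = 10$ (by direct computation) and has positive derivative there, hence holds throughout $n \ge 10$. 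Rouché's theorem then gives $p = f + g$ the same three zeros in $|x| < M$ as $f$, so no root of $p$ has modulus at least $M$.

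The main subtlety is choosing the Rouché split judiciously: the quadratic $q$ is hand-picked because its more negative root $-(n-4) - B$ is asymptotic to $-M$, leaving just enough margin for $f$ to dominate $g$ on the circle. Once this split is found, confirming that $x = -M$ is the minimizer of $|f|$ and verifying the final algebraic inequality are both routine.
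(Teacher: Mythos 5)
Your proposal is correct, but it takes a genuinely different route from the paper. The paper's argument stays entirely in the realm of real roots: it works with the same reduced cubic, observes that its discriminant is positive for $n \ge 10$, so all three roots are real and (since all coefficients are positive) negative, invokes an earlier result of Brown, Mol, and Oellermann (their Theorem 2.6) to place one root strictly between $-\left(1+\frac{1}{\sqrt2}\right)n+7$ and $-\left(1+\frac{1}{\sqrt2}\right)n+8$, and then uses the fact that the three roots sum to $-(2n-8)$ to bound the moduli of the remaining two roots by $\left(1-\frac{1}{\sqrt2}\right)n$, comfortably below the target. Your Rouch\'e argument with the split $p = xq + (n-1)$ bounds all complex roots at once and is self-contained: it needs neither the external root-location theorem nor the realness of the roots, at the cost of choosing the split so that the dominant root $-(n-4)-B$ of $q$ sits just inside the circle $|x| = M$ and of verifying the final quadratic inequality $M(A^2-B^2) > n-1$. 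Your computations check out: the discriminant $2(n^2-9n+16)$, the identity $A^2 - B^2 = \frac{(9-6\sqrt2)n}{2}+1$, the leading coefficient $\frac{6-3\sqrt2}{4}$, and the verification at $n=10$ are all correct, and since $A = \frac{n}{\sqrt2}-3 > 0$ for $n \ge 10$, the conclusion $A > B$ does follow from $A^2 > B^2$. One small slip in a parenthetical: with $x = Me^{i\theta}$ and $x_\pm < 0$ one has $|x-x_\pm|^2 = M^2 + x_\pm^2 + 2M|x_\pm|\cos\theta$, which is affine \emph{increasing} in $\cos\theta$ (equivalently, decreasing in $-\cos\theta$), not increasing in $-\cos\theta$ as written; the stated minimizer $x=-M$ and minimum value $M - |x_\pm|$ are nonetheless correct (likewise the indexing $M-|x_\pm| = A\mp B$ versus $A\pm B$ is harmless, since only the product $(A+B)(A-B)$ is used), so nothing in the argument breaks.
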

\begin{proof}
	The reduced Wiener polynomial of $T_n''$ is
	$(n-1) + (\frac{n^2 - 7n + 16}{2})x + (2n-8)x^2 + x^3$.
	The discriminant of this cubic is greater than $0$ for $n \ge 10$.
	Thus, it has $3$ real, negative roots which
	sum to $-(2n-8)$. By \cite[Thoerem 2.6]{brown2018roots},
	it has one root $-\left(1 + \frac{1}{\sqrt2}\right) n + 7 < r < -\left(1 + \frac{1}{\sqrt2}\right)n + 8$.
	Therefore $r$ and the other roots each have modulus at most
	$2n-8 - \left(1 + \frac{1}{\sqrt2}\right)n + 8
	< \left(1 + \frac{1}{\sqrt2}\right)n - 7$ for $n \ge 10$.
\end{proof}

\begin{lemma}
\label{lem:baddiam6}
	For $n \ge 31$, the tree $T_n'$ does not have a Wiener root with modulus
	greater than $2n-16$.
\end{lemma}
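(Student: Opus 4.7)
The plan is to apply Rouch\'e's theorem to the reduced Wiener polynomial of $T_n'$ with a carefully chosen splitting. First I would compute $W_{T_n'}$ by direct distance counting in the tree of Figure~\ref{fig:baddiam6}: with $n-9$ pendant leaves at the neighbor of the diametral-path center and one extra pendant on the adjacent path vertex, bookkeeping of pairs at each distance $1,2,\dots,6$ gives
\[
W_{T_n'}(x) = (n-1) + \tfrac{n^2-17n+90}{2}\,x + (2n-9)\,x^2 + (3n-21)\,x^3 + (2n-15)\,x^4 + x^5.
\]
The na\"ive Enestr\"om--Kakeya bound is $|z|\le 2n-15$, which is off by $1$ from what is needed, so I would instead write $W_{T_n'} = f + g$ with
\[
f(x) = x^3\bigl(x^2 + (2n-15)x + (3n-21)\bigr), \qquad g(x) = (2n-9)x^2 + \tfrac{n^2-17n+90}{2}x + (n-1).
\]

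Set $M = 2n-16$. For $n \ge 31$, the quadratic factor of $f$ has positive discriminant and its two roots $\alpha,\beta$ are real and negative with $|\alpha|,|\beta| < M$ (solving the relevant inequality gives $|\beta|<M$ for $n\ge 9$; the larger root has modulus $\approx 2n-16.5$). Consequently all five roots of $f$, counted with multiplicity, lie in $|z|<M$. On the circle $|z|=M$ the functions $|z-\alpha|$ and $|z-\beta|$ both attain their minimum at the single point $z=-M$ (since $\alpha,\beta$ are real negative and inside the disk), so the minimum of their product equals the product of the minima:
\[
\min_{|z|=M}\bigl|z^2 + (2n-15)z + (3n-21)\bigr| = (M-|\alpha|)(M-|\beta|) = M^2 - (2n-15)M + (3n-21) = n-5,
\]
which gives $|f(z)| \ge M^3(n-5)$ on the circle. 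The triangle inequality yields the upper bound $|g(z)| \le (2n-9)M^2 + \tfrac{n^2-17n+90}{2}M + (n-1)$, and the required comparison $M^3(n-5) > (2n-9)M^2 + \tfrac{n^2-17n+90}{2}M + (n-1)$ reduces to an explicit quartic in $n$ with leading term $8n^4$; one checks it holds at $n=31$ with substantial margin and that its derivative is positive for $n\ge 31$, so monotonicity handles all larger $n$. Rouch\'e's theorem then forces $W_{T_n'}$ and $f$ to have the same number of zeros in $|z|<M$, namely $5$, which accounts for every root of the degree-$5$ polynomial $W_{T_n'}$.

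The main obstacle is selecting the Rouch\'e splitting so that $|g|<|f|$ on $|z|=M$. The obvious choice $f=x^5+(2n-15)x^4$ fails: on $|z|=M$ one only gets $|f|\ge M^4 \sim 16n^4$, which is dominated by the $(3n-21)z^3$ term of the remainder with modulus $\sim 24n^4$. Absorbing that cubic term into $f$ so that $f$ factors as $x^3$ times a quadratic with real negative roots—one of which tracks the large real Wiener root of $T_n'$ near $-(2n-16.5)$—boosts the lower bound for $|f|$ on the circle by an extra factor of order $n$, which is exactly what is needed to close the gap between $2n-15$ and $2n-16$.
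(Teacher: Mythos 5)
Your proof is correct, but it takes a genuinely different route from the paper. The paper first locates the dominant real root directly, checking the sign change $W(-(2n-17))>0$, $W(-(2n-16))<0$, then factors $W(x)=(x-r)Q(x)$, bounds the coefficients of the quartic cofactor $Q$ (using $|2n-15+r|<2$, etc.), and rules out a root of $Q$ of modulus exceeding $2n-16$ by a direct triangle-inequality contradiction on $|y|^4$. You instead apply Rouch\'e's theorem on $|z|=2n-16$ with the splitting $f(x)=x^3\bigl(x^2+(2n-15)x+(3n-21)\bigr)$, $g=W-f$; the key points all check out: the quadratic factor has real negative roots of modulus less than $M=2n-16$ for $n\ge 31$ (the needed inequality $\sqrt{4n^2-72n+309}<2n-17$ holds for all $n>5$), the reverse triangle inequality gives $|f(z)|\ge M^3\bigl(M^2-(2n-15)M+3n-21\bigr)=M^3(n-5)$ on the circle (you do not even need the ``minima at the same point'' observation, though it is true), and the comparison $M^3(n-5)>(2n-9)M^2+\tfrac{n^2-17n+90}{2}M+(n-1)$ reduces to $8n^4-241n^3+2685n^2-13091n+23505>0$, which is positive and increasing for $n\ge 31$ as you assert. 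Your method buys a one-shot bound on all five roots at once and makes transparent why the threshold can be pushed from the Enestr\"om--Kakeya value $2n-15$ down to $2n-16$ (the quadratic contributes an extra factor of order $n$ at $z=-M$), at the cost of invoking Rouch\'e; the paper's argument is more elementary (intermediate value theorem plus coefficient estimates) and, as a by-product, exhibits explicitly the large real root of $T_n'$ near $-(2n-16.5)$, which parallels the treatment of $T_n$ in Lemma \ref{lem:bigroot}.
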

\begin{proof}
	Let $W(x)$ denote the reduced Wiener polynomial of $T_n'$.
	We have
	\[
		W(x) = (n-1) + \left(\tfrac{n^2-17n + 90}{2}\right)x
		+ (2n-9)x^2 + (3n-21)x^3 + (2n-15)x^4 + x^5.
	\]
	We find $W(-(2n-17)) > 0$ for $n \ge 13$ and
	$W(-(2n-16)) < 0$ for $n \ge 9$. Thus,
	$T_n'$ has a Wiener root $r$
	with $-(2n-16) < r < -(2n-17)$ for $n \ge 31$.

	Then $W(x)$ factors as
	\begin{align*}
		W(x) = (x-r)\Bigg( x^4 &+ (2n-15+r)x^3 
		+ (r(2n-15+r) + 3n-21)x^2 \\&+ \left(- \frac{n^2-17n+90}{2r} - \frac{n-1}{r^2} \right)x 
		+ \left(-\frac{n-1}{r} \right)\Bigg).
	\end{align*}
	Note that $|2n-15+r| < 2$, so
	$|r(2n-15+r) + 3n-21| < n-4$. Also, it is easy to show that
	$\left|- \frac{n^2-17n+90}{2r} - \frac{n-1}{r^2}\right|< \frac{n}{4}$ and $\left|-\frac{n-1}{r}\right| < 1$.
	Suppose that the second factor has a root $y$ with modulus greater
	than $2n-16 \ge 46$. Since for $n \ge 31$ we have 
	$n-4 < 2n-16 < |y| $ and $\frac{n}{4} < (2n-16)^2 < |y|^2$, we
	obtain
	\begin{align*}
		|y|^4 &\le 2|y|^3 + (n-4)|y|^2 + \frac{n}{4}|y| + 1 \\
		&\le \frac{2|y|^4}{46} + \frac{|y|^4}{46} +  \frac{|y|^4}{46}
		+ \frac{|y|^4}{46} \\
		&< |y|^4,
	\end{align*}
	a contradiction.
\end{proof}

\maxmod*
\begin{proof}
	For $n \ge 31$ we have $2n - 16 \ge \frac{3n-1}{2}$ and
	$2n-16 > \left(1 + \frac{1}{\sqrt2}\right)n - 7$. Thus,
	by Enestr\"om--Kakeya and Lemma \ref{lem:2n-16}, if $T$
	is good, then it does not have a Wiener root of modulus
	greater than $2n-16$. By Lemmas \ref{lem:baddiam4} and
	\ref{lem:baddiam6}, if $T$ is bad and not equal to $T_n$,
	then it does not have a Wiener root of modulus greater than
	$2n-16$ either.
	Thus by Lemma \ref{lem:bigroot}, $T_n$ is
	the only tree with a Wiener root of modulus greater
	than $2n-16$.
\end{proof}

\section{Trees with all real Wiener roots}
\label{realroots}
In this section we provide 
a construction for trees with arbitrarily large diameter and
all real Wiener roots. In \cite{brown2018roots}, Brown, Mol, and Oellermann give a construction which is incorrect (the claim that
$W(T_1;x) = (x+1)^2 W(T_0;x)$, where $T_1$ is the tree obtained
by adding a leaf to each of the vertices of $T_0$, is false).

\begin{theorem}[{\cite[Theorem 1]{kurtz1992sufficient}}]
\label{thm:allreal}
	Let $P$ be a polynomial of degree $n \ge 2$ with positive
	coefficients. If
	\[
		a_i^2 - 4a_{i-1}a_{i+1} > 0, \quad i = 1, 2,\dots, n-1,
	\]
	then all the roots of $P$ are real and distinct.
\end{theorem}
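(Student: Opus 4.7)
The plan is to apply the intermediate value theorem at $n+1$ carefully chosen test points on the real line. Since $P$ has positive coefficients, every real root must be negative, so I will pick values $0 = -t_0 > -t_1 > \cdots > -t_n$ and show that $\operatorname{sgn} P(-t_k) = (-1)^k$; this gives $n$ sign changes on $(-\infty,0]$, hence $n$ distinct real roots, which exhausts the degree. The natural choice is $t_k = 2 a_{k-1}/a_k$ for $k = 1,\ldots,n$. The hypothesis $a_k^2 > 4 a_{k-1} a_{k+1}$ rewrites as $a_k/a_{k+1} > 4\,a_{k-1}/a_k$, i.e.\ $t_{k+1} > 4 t_k$, so $(t_k)$ is strictly increasing, and this rapid growth is what powers every tail estimate below.

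Fix $k$ and decompose
\[
P(-t_k) = L_k + E_k^- + E_k^+, \qquad L_k = (-1)^{k-1} a_{k-1} t_k^{k-1} + (-1)^k a_k t_k^k,
\]
where $E_k^-$ collects the terms of index $i<k-1$ and $E_k^+$ those of index $i>k$. Substituting $t_k = 2 a_{k-1}/a_k$ into $L_k$ gives the clean identity $L_k = (-1)^k a_k t_k^k /2$. For $E_k^+$, the ratio of consecutive magnitudes satisfies $a_{i+1} t_k^{i+1}/(a_i t_k^i) = 2 t_k/t_{i+1} < 1/2$ for $i\ge k$, so the standard alternating-series estimate yields $|E_k^+|\le a_{k+1} t_k^{k+1} = (2t_k/t_{k+1})\cdot a_k t_k^k < a_k t_k^k/2 = |L_k|$, where the strict inequality is exactly the Kurtz condition $t_{k+1}/t_k > 4$. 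Running the same argument backward for $E_k^-$ (where the magnitudes $a_i t_k^i$ now \emph{increase} in $i$ up to $i=k-2$) gives $|E_k^-|\le a_{k-2} t_k^{k-2}$, and since the largest term $(-1)^{k-2} a_{k-2} t_k^{k-2}$ carries sign $(-1)^k$, the sum $E_k^-$ inherits the same sign as $L_k$.

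Combining, $P(-t_k) = (-1)^k\bigl(|L_k| + |E_k^-| - |E_k^+|\bigr)$, and $|L_k|>|E_k^+|$ forces $\operatorname{sgn} P(-t_k) = (-1)^k$. The sequence $P(0), P(-t_1),\ldots,P(-t_n)$ therefore alternates in sign, and the intermediate value theorem produces one real root in each of the $n$ intervals $(-t_k,-t_{k-1})$, accounting for all $n$ roots of $P$ and ensuring they are distinct. The main obstacle is that the bound $|E_k^+|<|L_k|$ is genuinely tight at the Kurtz threshold: it reduces to $4 t_k < t_{k+1}$, which is essentially the hypothesis itself. Any weakening of the constant $4$, or any careless choice of the multiplier $2$ in the definition of $t_k$, breaks the estimate, so one must track the alternating-series signs precisely; happily, with $t_k = 2 a_{k-1}/a_k$ everything aligns and the proof goes through with only the elementary inequalities above.
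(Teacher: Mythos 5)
The paper does not prove this statement at all: it is imported verbatim from Kurtz \cite{kurtz1992sufficient} and used as a black box in the proof of Proposition \ref{prop:realroots}, so there is no internal argument to compare yours against. Your proof is correct and self-contained. The choice $t_k = 2a_{k-1}/a_k$ makes $L_k = (-1)^k a_k t_k^k/2$ exactly; the hypothesis $a_k^2 > 4a_{k-1}a_{k+1}$ translates into $t_{k+1} > 4t_k$; the forward tail $E_k^+$ is an alternating sum whose consecutive magnitude ratios $2t_k/t_{i+1}$ are below $1/2$, so it has sign $(-1)^{k+1}$ and modulus at most $a_{k+1}t_k^{k+1} = (2t_k/t_{k+1})\,a_k t_k^k < |L_k|$; and the backward tail $E_k^-$, read from $i=k-2$ downward, is an alternating sum with decreasing magnitudes whose leading term has sign $(-1)^k$, so it only reinforces $L_k$. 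Hence $\operatorname{sgn}P(-t_k) = (-1)^k$ for $k=0,\dots,n$, and the intermediate value theorem places one root in each of the $n$ disjoint intervals $(-t_k,-t_{k-1})$, which exhausts the degree. This is essentially the classical route to Kurtz's criterion (sign alternation at a rapidly growing sequence of negative test points), and your observation that the constant $4$ is exactly what is consumed by the inequality $|E_k^+| < |L_k|$ correctly identifies where the hypothesis is sharp.
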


\proprealroots*
\begin{proof}
	Let $t > 4D^2$ be an integer, and for $1 \le i \le D-1$,
	let $a_i = t^{-\frac{i(i-1)}{2}}$, so $a_{i}^2 = ta_{i-1}a_{i+1}$
	for $2 \le i \le D-2$.
	Consider the tree $T$ in Figure \ref{fig:Treal}, where
	$n$ is sufficiently divisible by powers of $t$.
	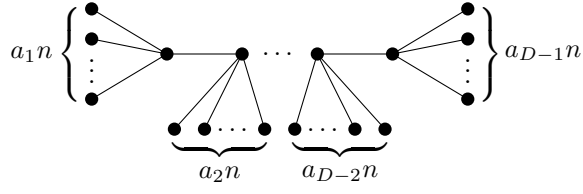
\begin{figure}[h]
	\centering
	\begin{tikzpicture}
		\fill (-2,0) circle(2.5pt);
		\fill (-1, 0) circle(2.5pt);
		\node at (-0.5,0) {$\dots$};
		\fill (0,0) circle(2.5pt);
		\fill (1, 0) circle(2.5pt);
		\fill (2, 0.6) circle(2.5pt);
		\fill (2, 0.2) circle(2.5pt);
		\node at (2, -0.12) {$\vdots$};
		\fill (2, -0.6) circle(2.5pt);
		\draw (-2,0) -- (-1, 0);
		\draw (0,0) -- (1,0);
		\draw (1,0) -- (2, 0.6);
		\draw (1,0) -- (2, 0.2);
		\draw (1, 0) -- (2, -0.6);
		\node at (2, 0) {$\left. \begin{array}{c} 
		\phantom{x} \\ \phantom{x} \\ \phantom{x}
		\end{array}\right\}$};
		\node at (3, 0) {$a_{D-1}n$};
		\fill (-3, 0.6) circle(2.5pt);
		\fill (-3, 0.2) circle(2.5pt);
		\node at (-3, -0.12) {$\vdots$};
		\fill (-3, -0.6) circle(2.5pt);
		\draw (-2,0) -- (-3, 0.6);
		\draw (-2,0) -- (-3, 0.2);
		\draw (-2, 0) -- (-3, -0.6);
		\node at (-3, 0) {$\left\{ \begin{array}{c} 
		\phantom{x} \\ \phantom{x} \\ \phantom{x}
		\end{array}\right.$};
		\node at (-3.8, 0) {$a_1n$};
		\fill (-1.9,-1) circle(2.5pt);
		\fill (-1.5, -1) circle(2.5pt);
		\node at (-1.1,-1) {$\dots$};
		\fill (-0.7, -1) circle(2.5pt);
		\draw (-1.5,-1)--(-1,0)--(-0.7,-1);
		\draw (-1.9,-1) -- (-1,0);
		\fill (-0.3,-1) circle(2.5pt);
		\node at (0.1,-1) {$\dots$};
		\fill (0.5,-1) circle(2.5pt);
		\fill (0.9, -1) circle(2.5pt);
		\draw (0.9,-1) -- (0,0);
		\draw (-0.3,-1)--(0,0)--(0.5,-1);

		\node at (-1.3,-1.25) {$\underbrace{\hspace{1.25cm}}$};
		\node at (-1.3, -1.6) {$a_2 n$};
		\node at (0.3,-1.25) {$\underbrace{\hspace{1.25cm}}$};
		\node at (0.3, -1.6) {$a_{D-2}n$};
	\end{tikzpicture}
	\caption{The tree $T$ with all real roots.}
	\label{fig:Treal}
	\end{figure}

	We have $d_1(T) = \ell_1(n)$,
	and $d_i(T) = c_in^2 + \ell_i(n)$, where 
	\[
		c_i = a_1a_{i-1} + a_2a_i + \cdots
	+ a_{D-i+1}a_{D-1}
	\] for $2 \le i \le D$, and
	the $\ell_i(n)$ are linear functions in $n$
	for $1 \le i \le D-1$.
	Note that 
	\[
	a_{i-1} = a_1a_{i-1} < c_i < D a_1a_{i-1} = D a_{i-1}.
	\]
	So 
	\[ 
		c_i^2 > a_{i-1}^2 = ta_{i-2}a_{i}
		> \frac{t}{D^2} c_{i-1}c_{i+1} > 4c_{i-1}c_{i+1},
	\]
	for $3 \le i \le D-2$. Thus, for $2 \le i \le D-2$,
	$d_i^2(T) - 4d_{i-1}(T)d_{i+1}(T)$ is a quartic polynomial
	in $n$ with positive leading coefficient, so
	for $n$ a sufficiently large multiple of $t^{\frac{(D-1)(D-2)}{2}}$, the coefficients of $\widehat W(T;x)$ satisfy the condition
	in Theorem \ref{thm:allreal}, and thus $T$ has all
	real Wiener roots.
\end{proof}

\section{Open problems}
\label{open}
We discuss some open problems and conjectures.
First, we have not resolved Problems 5.4 and 5.5 from
\cite{brown2018roots}.

\probrealroots*

\begin{prob}[{\cite[Problem 5.4]{brown2018roots}}]
	Which connected graph (or tree) of order $n$ has a Wiener root of 
	\textnormal{(i)}
	largest real part and \textnormal{(ii)} of largest imaginary part?
	What are the rates of growth, as a function of $n$, of these parameters?
\end{prob}

Note that our proof of Proposition \ref{prop:realroots} 
does not tell us anything about the roots other than that they
are real, and it also requires $n$ to be very large. 
We propose the following questions, which we think
may be interesting.

\begin{prob}
	For fixed $D$, what is the smallest connected graph (or tree) with diameter $D$	and all real Wiener roots?
\end{prob}

For $D = 2,3,4,5$, the smallest trees with diameter $D$ and
all real Wiener roots have sizes $3,7,10,15$ respectively.

\begin{prob}
	Do there exist connected graphs (or trees) with arbitrarily large diameter whose Wiener roots are all rational?
\end{prob}

\begin{prob}
	Which connected graphs (or trees) have Wiener
	polynomials with a double root? Are there trees with
	a repeated Wiener root other than $-1$?
\end{prob}
We note that there are $2$ trees of order $9$ and 
$54$ trees of order $16$ with
Wiener polynomials divisible by $(x+1)^2$. However, it
is not true for such trees $T$ that $W(T;x)/(x+1)^2$ is necessarily
the Wiener polynomial of any graph (it may have negative
coefficients). There are no trees of order $n \le 16$ with
a repeated Wiener root not equal to $-1$. We also do not
know of a general construction for trees with $-1$ as a
double root.

Calculations suggest the following conjecture, which may answer
part of \cite[Problem 5.4]{brown2018roots}.

\begin{conjecture}
	For $n \ge 9$, the tree with order $n$ with a Wiener root of largest
	imaginary part is the tree $\widetilde T_n$ shown in
	Figure \ref{fig:tilden}. It appears that this tree has a Wiener root whose real
	part approaches $-1/2$ and imaginary part which approaches
	$c\sqrt n$ asymptotically for some constant $c$. 
\begin{figure}[h]
\centering
\begin{tikzpicture}
	\fill (-3,0) circle(2.5pt);
	\fill (-2,0) circle(2.5pt);
	\fill (-1,0) circle(2.5pt);
	\fill (0,0) circle(2.5pt);
	\fill (1,0) circle(2.5pt);
	\fill (2,0) circle(2.5pt);
	\fill (3,0) circle(2.5pt);
	\fill (-1,1.2) circle(2.5pt);
	\fill (-0.3,1.2) circle(2.5pt);
	\node at (0.3,1.2) {$\dots$};
	\node at (0, 1.45) {$\overbrace{\hspace{2.1cm}}$};
	\node at (0, 1.8) {$n - 7$};
	\fill (1,1.2) circle(2.5pt);

	\draw (-3,0) -- (-2,0) -- (-1,0) -- (0,0) -- (1,0) -- (2,0) -- (3,0);
	\draw (-1,1.2) -- (0,0.0);
	\draw (-0.3, 1.2) -- (0, 0.0);
	\draw (1, 1.2) -- (0, 0.0);
\end{tikzpicture}
\caption{The tree $\widetilde T_n$ with maximum imaginary
part Wiener root, for $9 \le n \le 18$.}
\label{fig:tilden}
\end{figure}
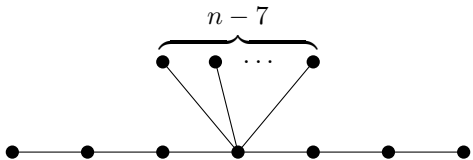
\end{conjecture}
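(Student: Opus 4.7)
The plan is to split the conjecture into an asymptotic root computation for $\widetilde T_n$ alone and an extremality comparison against all other trees of order $n$; the first is a routine perturbation calculation and the second is the genuine obstacle. First I would compute $\widehat W(\widetilde T_n;x)$ explicitly. With $m = n-7$ pendant leaves attached to the central vertex of the length-$6$ path, tallying the contributions from within the path, within the leaves, and between the two yields
\[
	\widehat W(\widetilde T_n;x) = (n-1) + \tfrac{n^2-11n+38}{2}x + (2n-10)x^2 + (2n-11)x^3 + 2x^4 + x^5.
\]
To locate the roots of largest imaginary part I would rescale $x = \sqrt{n}\,u$ and divide by $n^{5/2}$; the coefficients converge to those of $u(u^4 + 2u^2 + \tfrac 12)$ as $n \to \infty$. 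Its nonzero roots satisfy $u^2 = -1 \pm \tfrac{1}{\sqrt 2}$, both negative, so all four are purely imaginary, and the largest is $\sqrt{1 + 1/\sqrt 2}$. Continuity of roots then forces one Wiener root of $\widetilde T_n$ to satisfy $\operatorname{Im}(z) \sim c\sqrt{n}$ with $c = \sqrt{1 + 1/\sqrt 2} \approx 1.307$.

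Next I would pin down the real-part asymptotic by a second-order expansion. Writing $x = \sqrt{n}\,u_0 + u_1 + O(n^{-1/2})$ with $u_0$ an imaginary leading-order root, substituting into $\widehat W(\widetilde T_n;x) = 0$, and collecting the $n^2$-order coefficient yields
\[
	u_1\bigl(5u_0^4 + 6u_0^2 + \tfrac 12\bigr) + 2\bigl(u_0^4 + u_0^2\bigr) = 0.
\]
Applying $u_0^4 = -2u_0^2 - \tfrac 12$ to numerator and denominator shows each is a multiple of $2u_0^2 + 1$; these cancel, leaving $u_1 = -\tfrac 12$ independently of the choice of $u_0$. All four complex Wiener roots therefore approach the vertical line $\operatorname{Re}(z) = -\tfrac 12$, matching the conjecture.

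For the extremality claim I would try to follow the template of Theorem \ref{thm:maxmod}: obtain a general upper bound on $\operatorname{Im}(z)$ over Wiener roots of an arbitrary tree of order $n$ and show that only $\widetilde T_n$ attains it. Unlike the modulus case there is no off-the-shelf Enestr\"om--Kakeya analogue for imaginary parts, so such a bound must be built from the coefficient structure of Wiener polynomials: the identities $\prod|r_i| = (n-1)/d_D$ and $\sum r_i = -d_{D-1}/d_D$, combined with the fact that $d_2$ is the only coefficient of order $n^2$, force a trade-off between the number of large-modulus roots and their size. A plausible route is a case analysis on the diameter $D$, showing $|\operatorname{Im}(z)| \le c_D\sqrt{n} + O(1)$ with $c_D < c$ for $D \ne 6$, followed by a structural argument within diameter $6$ isolating $\widetilde T_n$ as the unique maximizer; the finitely many orders $9 \le n \le N_0$ would be verified by direct enumeration.

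The main obstacle is exactly this extremality bound. The perturbation analysis gives the target constant, but translating coefficient information into a sharp bound on the imaginary part of a root is not standard, and a complete proof would likely require either a new complex-analytic inequality sensitive to the Wiener-polynomial coefficient pattern, or a delicate structural induction in the spirit of Lemma \ref{lem:2n-16}.
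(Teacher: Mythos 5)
This statement is a conjecture in the paper's open-problems section; the paper offers no proof, only computer verification of extremality for $9 \le n \le 18$ and a numerical observation about the asymptotics. So there is no proof of record to compare against. Your coefficient computation for $\widehat W(\widetilde T_n;x)$ is correct, and your two-term expansion checks out: the rescaled limit $u(u^4+2u^2+\tfrac12)$ has purely imaginary nonzero roots with $u_0^2 = -1 \pm \tfrac{1}{\sqrt2}$, the correction equation $u_1(5u_0^4+6u_0^2+\tfrac12)+2(u_0^4+u_0^2)=0$ does reduce via $u_0^4=-2u_0^2-\tfrac12$ to $u_1=-\tfrac12$, and this is consistent with the fact that the five roots of $\widehat W$ sum to $-2$ while the real root tends to $0$. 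Made rigorous (a routine Rouch\'e argument around each $\sqrt n\,u_0 - \tfrac12$), this would actually upgrade the paper's ``it appears that'' sentence to a theorem and pin down $c=\sqrt{1+1/\sqrt2}$, which is more than the paper claims.

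The genuine gap is the first sentence of the conjecture, the extremality of $\widetilde T_n$ among all trees of order $n$, and you have not proved it; you have only named it as the obstacle and sketched a hoped-for strategy. Nothing in the proposal supplies the needed upper bound $|\operatorname{Im}(z)| \le c\sqrt n + O(1)$ for roots of arbitrary trees, and the proposed ingredients ($\prod|r_i| = (n-1)/d_D$, $\sum r_i = -d_{D-1}/d_D$, the fact that only $d_2$ is quadratic in $n$) do not obviously combine into such a bound: symmetric-function identities control products and sums of all roots, not the imaginary part of a single one, and there is no Enestr\"om--Kakeya analogue to invoke. So the proposal, as written, proves (modulo routine rigor) only the asymptotic half of the statement and leaves the extremality claim exactly where the paper leaves it --- as an open conjecture supported by finite computation.
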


The fact that $\widetilde T_n$ has the Wiener root with
maximum imaginary part has been verified for $9 \le n \le 18$.

\begin{remark}
	Finally, we have not found the tree of order $n$
	with maximum modulus Wiener root for $n \le 31$,
	but we suspect that it is always either $T_n$ or $T_n''$
	(Brown, Mol, and Oellermann verified that it is $T_n''$
	for $n \le 17$).
\end{remark}

\section{Acknowledgements}
This research was conducted at the University of Minnesota
Duluth REU and was supported by NSF / DMS grant 1650947 and 
NSA grant H98230-18-1-0010. I would like to thank
Joe Gallian for suggesting the problem, and I would like to
thank Aaron Berger, Joe Gallian, and Mitchell Lee for many
helpful comments on the paper.

\bibliography{wiener}{}
\bibliographystyle{abbrv}

\end{document}